\newtheorem{thm}{Theorem}[section]
\newtheorem{prop}[thm]{Proposition}
\newtheorem{lemma}[thm]{Lemma}
\theoremstyle{definition}
\newtheorem{dfn}[thm]{Definition}
\newtheorem{stat}{Statement}
\title
{Extremal configurations of robot arms in three dimensions}
\author{ Dirk Siersma}
\address{ University of Utrecht, Mathematisch Instiuut;  e-mail: D.Siersma@uu.nl
}
 \keywords{Mechanical linkage,
polygonal linkage, robot arm, configuration space, moduli space,
oriented area,oriented volume}
\begin{document}

\maketitle \setcounter{section}{0}

\begin{abstract}We define a volume function for a robot arm in $\mathbb R^3$ and give geometric conditions for its critical points.
\end{abstract}

\section{Introduction}

Linkages are flexible 1-dimensional structures, where edges are straight intervals of a fixed length, where flexes are allowed at vertices. For general properties of linkages we refer to
\cite{conndem},\cite{milskap} and \cite{milskap1}.

Recently G. Khimshiashvili, G. Panina, their co-workers
 and the author investigated various extremal problems on the moduli spaces of
linkages.
An important part of that studies concerned the cyclic
configurations of planar polygonal linkages and open robot arms considered as the
critical points of the oriented area function \cite{khi4}, \cite{khipan} , \cite{kpsz2}, 
 \cite{khisie} and \cite{panzh}.

The aim of the current paper is to generalize these statements to the
3-dimensional case. We will give a  geometric
description of the critical configurations in the case of oriented
volume in 3D. The extremal arms consist of planar circular contributions combined with zigzags (theorem  \ref{thm:crit-arms}).
For computational reasons we consider the signed volume function on a parameter space and not on the moduli space. The isotropy groups  of oriented isometries acting on this parameter space are not constant. We study this effect for the 3-arm and show in that case:

The  oriented moduli space of 3-arms in  ${\mathbb R}^3$ is a
3-sphere. The Volume function is an exact topological Morse function on this space
with precisely two Morse critical points.

This research was supported through the
programme "Research in Pairs"  by the Mathematisches Oberwolfach in 2010. It's our special pleasure to
acknowledge the excellent working conditions and warm hospitality of
the whole staff of the institute during our visit in November 2010. 
The outcome of the project was published in a Oberwolfach preprint \cite{kpsz1}. Sections 6-9 are the source of the current paper. Later G. Panina \cite{pan3} and \cite{khrpan} obtained results for the volume function on closed polygons, including information about Morse indices.

I thank
G. Khimshiashvili, G. Panina and A. Zhukova for useful discussions their contributions to this paper.

\section{Preliminaries and notation}\label{section_preliminaries}

An \textit{ $n$-linkage} is a sequence of positive numbers
$l_1,\dots ,l_n$. It should be interpreted as a collection of rigid
bars of lengths $l_i$ joined consecutively by revolving joints in a
chain, either open or closed. Open linkages are sometimes called
\textit{ robot arms}. We study the flexes of the both types of chain
with allowed self-intersections. This is formalized in the following
definitions.
\begin{dfn}

For an open  linkage $L$, \textit{a configuration} in the
Euclidean space $ \mathbb{R}^d$ is a sequence of points
$R=(p_1,\dots,p_{n+1}), \ p_i \in \mathbb{R}^d$ with
$l_i=|p_i,p_{i+1}|$ modulo the action of orientation preserving
isometries.
We also call $R$ \textit{an open chain}.

 The set $M_d^\circ (L)$ of all such
configurations is \textit{the moduli space, or the configuration
space of the robot arm }$L$.

 For a closed polygonal linkage, we claim in addition that the last
point coincides with the first point: a configuration of the linkage
$L$ in the Euclidean space $ \mathbb{R}^d$ is a sequence of points
$P=(p_1,\dots,p_n), \ p_i \in \mathbb{R}^d$  with
$l_i=|p_i,p_{i+1}|$ for $i=1,..,n-1$ and $l_n=|p_n,p_{1}|$. As
above, the action  of orientation preserving isometries is factored
out.
We also call $P$ \textit{a closed chain} or a \textit{polygon}.

The set $M_d(L)$ of all such configurations  is \textit{the moduli
space, or the configuration space of the polygonal linkage }$L$.

\end{dfn}

In \cite{khipan} and
\cite{khisie}  the 2-dimensional case was treated with the signed area function
on the configuration space. We recall some definitions and results.

\begin{dfn} \label{Dfn_area}

    The \textit{signed area} of a polygon $P$ with the vertices \newline $p_i = (x_i,
y_i)$  is defined by
$$2A(P) = (x_1y_2 - x_2y_1) + \ldots + (x_ny_1 - x_1y_n).$$

		The \textit{signed area} of an open chain  with the vertices $p_i = (x_i,
y_i)$  is defined by
$$2A(P) = (x_1y_2 - x_2y_1) + \ldots + (x_ny_{n+1} - x_{n+1}y_n)+(x_{n+1}y_1 - x_1y_{n+1}).$$
In other words, we add one more edge that turns an open chain to a
closed polygon and take the signed area of the polygon.

\end{dfn}

\begin{dfn}

     A polygon  $P$  is called \textit{cyclic} if all its vertices $p_i$
lie on a circle.

		A robot arm  $R$ is  called \textit{diacyclic} if all its vertices
$p_i$ lie on a circle, and $p_1p_{n+1}$ is the diameter of the
circle.

\end{dfn}

 Cyclic polygons and cyclic open chains arise
 as critical points of the signed area:

\begin{thm}\label{Thm_critical_are_cyclic} (\cite{khipan},
\cite{khisie})

    Generically, a polygon $P$ is a critical point of the
signed area function $A$  iff $P$ is a cyclic configuration.

    Generically, an  open robot arm $R$ is a critical point of the
signed area function $A$  iff $R$ is a diacyclic configuration.
      \qed
\end{thm}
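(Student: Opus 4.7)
The plan is to reduce both statements to a single gradient calculation on the parameter space of edge directions, and then read the critical condition off geometrically. Normalize by translation so that $p_1$ is the origin, and parametrize by the angles $\alpha_1,\ldots,\alpha_n$ that the edges make with a fixed reference direction; set $e_m:=(\cos\alpha_m,\sin\alpha_m)$ and $p_k=\sum_{j<k}l_j e_j$. The robot arm corresponds to the free parameter space, while the closed polygon corresponds to the submanifold cut out by the closure constraint $\Phi(\alpha):=\sum_{j=1}^n l_j e_j = 0$.

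The core computation is $\partial(2A)/\partial\alpha_m$. Since $\partial p_k/\partial\alpha_m = l_m e_m^\perp$ for $k>m$ and zero otherwise, a telescoping cancellation in the bilinear sum $2A=\sum_k p_k\wedge p_{k+1}$ gives
\[
\partial(2A)/\partial\alpha_m \;=\; l_m\, e_m^\perp \wedge (p_{n+1}-p_m-p_{m+1}) \;=\; 2 l_m\, e_m^\perp\wedge (O_{\mathrm{end}}-c_m),
\]
where $c_m=\tfrac12(p_m+p_{m+1})$ is the midpoint of the $m$-th edge and $O_{\mathrm{end}}=\tfrac12(p_1+p_{n+1})$ is the midpoint of the chord joining the endpoints.

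For the open robot arm these partials must all vanish, so $O_{\mathrm{end}}-c_m$ is parallel to $e_m^\perp$, i.e.\ $O_{\mathrm{end}}$ lies on the perpendicular bisector of every edge. Hence every vertex is equidistant from $O_{\mathrm{end}}$, and since $|O_{\mathrm{end}}\,p_1|=|O_{\mathrm{end}}\,p_{n+1}|=\tfrac12|p_1p_{n+1}|$ by construction, all vertices lie on the circle with diameter $p_1p_{n+1}$---the diacyclic condition. For the closed polygon one imposes $p_{n+1}=0$ and applies Lagrange multipliers: since $\partial\Phi/\partial\alpha_m=l_m e_m^\perp$ as well, the critical condition rearranges via the same wedge--dot identity to $e_m\cdot(c_m-O)=0$ for some $O\in\mathbb R^2$ depending only on the multiplier. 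The same geometric argument then shows $O$ is equidistant from all vertices, so the polygon is cyclic.

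The only subtle point is the word \emph{generically}, which is present to rule out configurations at which the closure Jacobian drops rank (for instance, when all edge directions are parallel); on the dense open stratum where the moduli space is smooth the Lagrange multiplier argument gives a genuine equivalence, and the exceptional locus forms a proper closed subset. Beyond this the proof is mechanical, the two main pieces of bookkeeping being the telescoping identity and the sign conventions for $\wedge$ and $\perp$ in $\mathbb R^2$.
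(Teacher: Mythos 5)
Your argument is correct, and its engine --- the telescoping identity $\partial(2A)/\partial\alpha_m = l_m\,e_m^\perp\wedge\bigl(p_{n+1}-p_m-p_{m+1}\bigr)$, read off as ``the midpoint of the closing chord lies on the perpendicular bisector of every edge'' --- is exactly the standard derivation from the cited sources. Note that the paper itself supplies no proof of this theorem: it is imported verbatim from \cite{khipan} and \cite{khisie}, so there is nothing internal to compare against; but your computation is precisely the two-dimensional analogue of the first-variation identity $[b_1,\,b_2+\cdots+b_{r-1}-(b_{r+1}+\cdots+b_n),\,\dot b_r]=0$ that the paper does carry out for the volume function in Section 4, so your route is consistent with the paper's methodology. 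Your treatment of ``generically'' (excluding aligned configurations where the closure Jacobian drops rank and the quotient by rotations is not a manifold) is the intended reading, and the converse direction of the ``iff'' follows by running the same identities backwards from the equidistance of the circumcenter, which you may wish to state explicitly.
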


\section{About $3$-arm in ${\mathbb R^3}$} \label{3arm}
Before we treat in the next section open linkages with $n$ arms in
${\mathbb R^3}$,
we study here $3$-arms in $\mathbb{R}^3$.\\
Let us  fix some notation. The arm vectors are:
 $ a = (1,0,0)$, $b$ and $c$ of length $|a|,|b|,|c|$. \\
A spatial arm is constructed as follows: we take the segments from
$O$ to the end points $A$, $B$, $C$ of $a$, $a+b$, $a+b+c$. This
yields a tetrahedron $OABC$.

\begin{dfn}
We define the \textit{signed volume} $V$ of the $3$-arm as the
triple vector product:
          $$ V = [a,a+b,a+b+c] = [a,b,c]. $$
          \end{dfn}

We intend to study $V$ on several parameter spaces:
\begin{itemize}
\item  On $S^2 \times S^2$,
\item  On $S^1 \times S^2$,  where we fix the vector $b$ to lie in the $xy$ plane,
\item  On the moduli space $M^o_3$  (mod the $SO(3)$ action).
\end{itemize}
In each of these cases  critical points may be different. We intend
to compare the critical points and the Morse theory for the three
cases.

\subsection{On $S^2 \times S^2$}
Before starting we define some special positions of the $3$-arm:
\begin{itemize}
\item  \textit{Tri-orthogonal}: The vectors $a$, $b$, $c$ are tri-orthogonal;
equivalently: the sphere with diameter $OC$ contains also the points $A$ and $B$,
\item \textit{Degenerate}: The arm lies in a two-dimensional subspace,
\item \textit{Aligned}: The arm is contained in a line.
\end{itemize}

\begin{prop}

The signed area $V: S^2 \times S^2 \rightarrow  {\mathbb R} $ has
the following critical points:
\begin{itemize}
\item    Tri-orthogonal arms (maximum, resp minimum).
 These are  Bott-Morse critical points with transversal index $3$ and critical value $\pm |a||b||c|$.
\item Isolated points, corresponding to the aligned configurations. Here $V$ has Morse index $2$ and the critical value $0$.
\end{itemize}

\end{prop}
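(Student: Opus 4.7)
The plan is to treat $V$ as a smooth function on the product manifold $S^2_{|b|} \times S^2_{|c|}$, extract the constrained criticality conditions from the differential, split them into two geometric cases, and then compute Hessians in local coordinates to read off the Morse and Bott--Morse indices.

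For admissible tangent vectors $\delta b, \delta c$ (so that $\delta b \cdot b = 0$ and $\delta c \cdot c = 0$), multilinearity of the triple product gives
$$dV(\delta b, \delta c) = [a,\delta b, c] + [a, b, \delta c] = (c \times a) \cdot \delta b + (a \times b) \cdot \delta c.$$
Hence $(b,c)$ is critical if and only if $c \times a$ is a scalar multiple of $b$ and $a \times b$ is a scalar multiple of $c$. I would then split cases according to whether $c \times a$ vanishes. If $c$ is not parallel to $a$, the first equation forces $b \perp a$ and $b \perp c$; since this makes $a \times b$ nonzero, the second equation forces $c \perp a$ and $c \perp b$. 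So $(a,b,c)$ is tri-orthogonal and $V = \pm |a||b||c|$. Such configurations form a one-parameter family, obtained by rotating $(b,c)$ simultaneously about the $a$-axis, i.e.\ a circle for each sign of $V$. If instead $c$ is parallel to $a$, the second equation gives $a \times b$ parallel to $a$; combined with $a \times b \perp a$ this forces $a \times b = 0$, so $b$ is parallel to $a$ as well. These are the four isolated aligned configurations, each with $V = 0$.

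To identify the indices I would expand $V$ to second order in normal coordinates on each sphere. At an aligned configuration, writing $b = |b|(\varepsilon_b \hat a + s_1 e_1 + s_2 e_2) + O(s^2)$ and $c = |c|(\varepsilon_c \hat a + t_1 e_1 + t_2 e_2) + O(t^2)$ for $\varepsilon_b, \varepsilon_c \in \{\pm 1\}$ and an orthonormal basis $e_1, e_2$ of $a^\perp$, the leading contribution comes from $a \cdot (\delta b \times \delta c)$ and equals $\varepsilon_b \varepsilon_c |a||b||c|(s_1 t_2 - s_2 t_1)$, a quadratic form of signature $(2,2)$, hence Morse index $2$. At the tri-orthogonal representative $b = |b|e_1$, $c = |c|e_2$ the analogous expansion, with the second-order sphere corrections $-\tfrac12 |b|(s_1^2 + s_2^2)e_1$ and $-\tfrac12 |c|(t_1^2 + t_2^2) e_2$ retained, yields a quadratic form of eigenvalues proportional to $-1, -\tfrac12, -\tfrac12, 0$, the zero eigenvalue being tangent to the critical circle. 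The main technical obstacle will lie in this tri-orthogonal Hessian: the naive first-order expansion of $b$ and $c$ alone gives a degenerate quadratic form of the wrong rank, so the quadratic normalization corrections must be kept from the outset to recover the correct transversal index $3$.
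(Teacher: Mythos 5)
Your proposal is correct and follows essentially the same route as the paper: the first-order conditions $[a,\dot b,c]=0$, $[a,b,\dot c]=0$ with the dichotomy between the orthogonality case ($a\times c\neq 0$, forcing $b\perp a$, $b\perp c$, and then $c\perp a$, $c\perp b$) and the parallel case (forcing alignment), yielding exactly the tri-orthogonal circles and the four isolated aligned points. Your explicit second-order expansions (signature $(2,2)$ at the aligned points, eigenvalues proportional to $-1,-\tfrac12,-\tfrac12,0$ at the tri-orthogonal circle with the kernel along the $SO(2)$-orbit) are in fact more detailed than the paper, which merely asserts the indices and checks them against the Morse--Bott inequalities.
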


\begin{proof}

We use coordinate systems on the spheres; we take partial
derivatives with respect to all coordinates. We denote the partial
derivatives of $b$ by $\delta_1b$ and $\delta_2b$. Both are non-zero
and orthogonal to $b$. We take  partial derivatives of $V = [a,b,c]
$ in the $(\delta_1b,\delta_2b)$ directions: $[a,\delta_1b,c] = 0$
and   $[a,\delta_2b,c] = 0$.

We will shorten this to $[a,\dot{b},c] = 0$  meaning that the
equation holds for all vectors in the tangent space of $b$ (which is
orthogonal to $b$ and spanned by $\delta_1b$ and $\delta_2b$). In
this way we get:
$$ [a,\dot{b},c] = 0 , \ \  [a,b,\dot{c}] = 0. $$
For both equations we will consider two cases:

\bigskip

\begin{tabular}{|l|l|l|}
\hline
 equation            &   ortho condition   &  parallel condition\\
 \hline
                      & $a \times c \ne o$  &  $a \times c = o$ \\
 $[a,\dot{b},c] = 0$  &   equivalent to    & equivalent to \\
                      &  $b \perp a$ and $b \perp c$ & $a \parallel c$ \\
  \hline
                      &     $a \times b \ne o$ &    $a \times b =  o$   \\
 $  [a,b,\dot{c}] = 0$  &      equivalent to    & equivalent to \\
                       & $c \perp a$ and $c \perp b$ & $a \parallel b$ \\
                       \hline
\end{tabular}

\bigskip

The combination of the two ortho conditions gives the tri-orthogonal
case of the proposition; combining the two parallel conditions is
the aligned case. Combining one ortho condition with the other
parallel condition gives a contradiction.
\end{proof}
Next we describe the type of the critical points. For the positively
oriented tri-orthogonal case we get a maximum. Due to the remaining
$SO$-action the singular set is an $S^1$, and its transversal Morse
index is $3$. The other orientation gives a minimum on $ S^1$ with
the transversal Morse index $0$. The aligned configurations ($4$
cases) occur in isolated points. In all these cases we have index
$2$. We check  the Bott-Morse formula:
$$ \sum t^{\lambda(C)} P(C) - P(M) = (1+t)R(t)   $$
where $R(t)$ must have non-negative coefficients. In our case we
have
$$t^3(1+t)+(1+t)+(1+t) + 4t^2 - (t^4+2t^2+1) = t^3+2t^2+t =(1+t)(t^2+t),$$
so this is OK.\qed

\subsection{On $S^1 \times S^2$}

After a rotation we can always assume that  $b$ lies in the
$xy$-plane. We consider $SO$-action, that fixes this plane.

\begin{prop}

The signed volume $V: S^1 \times S^2 \rightarrow  {\mathbb R} $ has
the following critical points:
\begin{itemize}
\item  $4$ points, corresponding to  tri-orthogonal arms ($2$ maxima, respectively $2$ minima).\\ At these points $V$ has critical value $0$.
\item Two circles corresponding to degenerate  configurations. where $a$ and $b$ are aligned and
$c$ is free to move in the $xy$-plane. At these points $V$ has
Bott-Morse critical points with transversal index $1$.
\end{itemize}

\end{prop}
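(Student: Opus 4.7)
The plan is to mimic the dot-vector argument of the previous proposition, adjusted for the fact that the tangent space of $b\in S^1$ is now one-dimensional. I fix $a=(1,0,0)$, parametrize $b=|b|(\cos\theta,\sin\theta,0)$ on the horizontal circle, and let $c$ range over the sphere of radius $|c|$; a tangent vector $\dot b$ to $S^1$ is a multiple of $(-b_2,b_1,0)$, while $\dot c$ is any vector perpendicular to $c$. The critical point equations remain $[a,\dot b,c]=0$ and $[a,b,\dot c]=0$ (for all admissible $\dot b$ and all $\dot c\perp c$). Direct expansion gives $[a,\dot b,c]=b_1c_3$ up to a nonzero scalar, while $a\times b=(0,0,b_2)$, so the first equation forces $b_1=0$ or $c_3=0$, and the second forces $a\times b$ parallel to $c$, i.e.\ $b_2=0$ or $c$ on the $z$-axis. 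Of the four combinations, two are incompatible with $|b|,|c|>0$; the survivors are ``$b_1=0$ with $c$ on the $z$-axis'', giving the four isolated tri-orthogonal points, and ``$c_3=0$ with $b_2=0$'', placing $b$ along $\pm a$ and letting $c$ roam the equator of $S^2$, yielding the two critical circles.

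To identify the Morse types I would compute Hessians in adapted local coordinates. At the tri-orthogonal base point $b=(0,|b|,0)$, $c=(0,0,|c|)$, using $\theta$ near $\pi/2$ and Cartesian $(c_1,c_2)$ near the pole of $S^2$, the expansion
\[
V=|b||c|\sin\theta\sqrt{1-(c_1^2+c_2^2)/|c|^2}
\]
has strictly negative-definite Hessian, giving a local maximum of Morse index $3$; sign bookkeeping over the four sign-choices of $b_2$ and $c_3$ then splits the quadruple into two maxima and two minima. At a point on a degenerate circle, say $b=(|b|,0,0)$ and $c=(|c|,0,0)$, the local expansion $V\approx|b|\,\theta\,c_3$ in coordinates $(\theta,c_2,c_3)$ has Hessian with eigenvalues $\{|b|,-|b|,0\}$ and null space spanned by the $c_2$-direction, which is exactly the tangent to the critical circle; this confirms a Bott--Morse circle of transversal index $1$.

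The main subtlety will be the Bott--Morse step: one must verify simultaneously that the unique Hessian null direction is tangent to the critical circle and that the transverse $(\theta,c_3)$-block is non-degenerate, the role of the cross-term $|b|\theta c_3$ being decisive. As a cross-check I would verify the Bott--Morse identity on $S^1\times S^2$: with $P(t)=(1+t)(1+t^2)$, the contributions $2+2t^3$ from the isolated extrema plus $2t(1+t)$ from the two circles of transversal index $1$ exceed $P(t)$ by $(1+t)(1+t^2)$, so $R(t)=1+t^2$ has non-negative coefficients, corroborating the classification.
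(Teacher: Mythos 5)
Your proposal is correct and follows essentially the same route as the paper's computation: the paper writes $V=|a||b||c|\sin\beta\cos\gamma_1$ in spherical/circle coordinates and reads off the identical critical set, Morse indices, and Bott--Morse check, while you arrive at the same conclusions via the triple-product derivative conditions $[a,\dot b,c]=[a,b,\dot c]=0$ and local Hessians. One useful by-product of your computation: the value $V=\pm|a||b||c|$ you obtain at the four tri-orthogonal points shows that the proposition's claim of critical value $0$ there is a misprint --- the critical value $0$ belongs to the two degenerate circles, not to the tri-orthogonal extrema.
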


The proof is a straight forward computation \cite{kpsz1}.

We check the result with Bott-Morse formula:\\
$ 2t^3 + 2 + 2t(1+t) - (t^3 + t^2 + t + 1) = t^3 + t^2 + t + 1 = (t+1)(t^2+1)$ .\\

Note the difference between the situation on $S^2 \times S^2$ and on
$S^1 \times S^2$.

\subsection{On the moduli space $M^o_3$ .}

 This moduli space is homeomorphic to $S^3$. This is shown in \cite{merm}. We return to this later in this paper.
 An outline is as follows:
 First construct the non oriented moduli space and show that this is a topological 3-ball.
The sphere $S^3$ appears as a gluing  of two such balls along their
common boundary. This boundary consists of degenerate arms (those
who are not the maximal dimension).

The function $V$ will be studied separately on the two hemispheres,
each of whom has exactly one  Morse point. Near the common boundary
one can show that $V$ glues to a topologically regular function. In
Section \ref{SectionGram}  we give details and prove the following:

\begin{thm}
The  oriented moduli space of 3-arms in  ${\mathbb R}^3$ is a
3-sphere. $V$ is an exact topological Morse function on this space
with precisely two Morse critical points.\qed
\end{thm}

Note that the critical points with $V=0$, which we got before in the
cases with parametrization $S^2\times S^2$ or $S^1 \times S^2$,
are no longer (topological) critical on the moduli space.

\section{About n-arms in ${\mathbb R^3}$}

There is no unique way to attach a  volume to a
polygonal chain. We take one special situation as starting point
for our definition of (signed) volume in case of a n-arm in $\mathbb R^3$.
The following picture where all simplices contain $a=b_1$
illustrates this definition. 

The relation with the volume of the convex hull can be lost, especially when the combinatorics of the convex hull changes.

\vspace{0cm}
   
\begin{center}
\includegraphics[ width=20em]{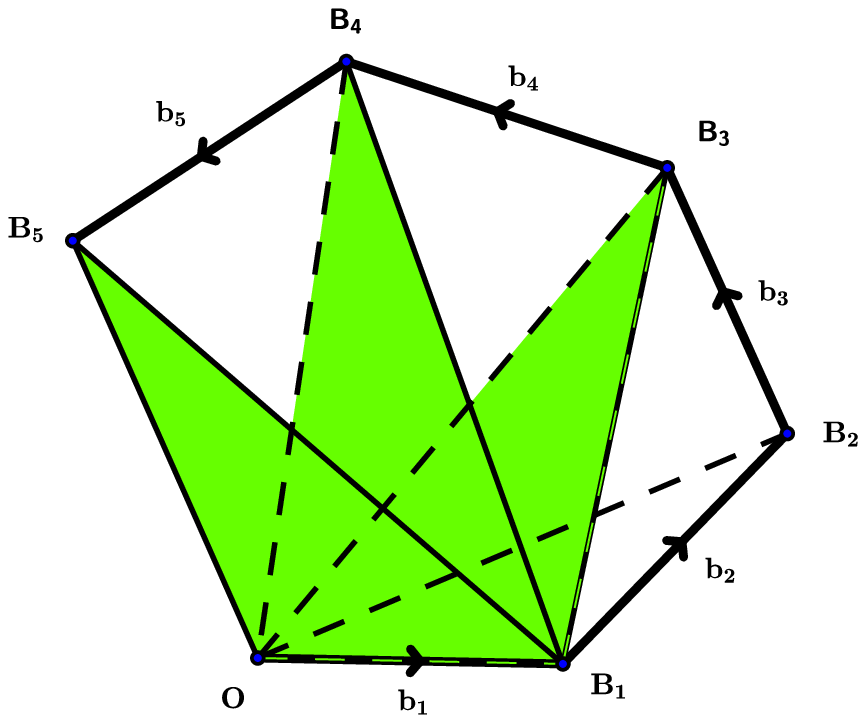}
\end{center}

\vspace{0cm}

\begin{dfn}Let an n-arm be given by the vectors $ b_1,\cdots, b_n$. The
vertices are $O,B_1,\cdots, B_n$. We fix $b_1= a$ (as before). We
denote $c_k= \sum_{i=1}^{k}b_i$ (the endpoint of this vector is
$B_k$). The \textit{signed volume function} is defined as
$$V = \sum_{k=1}^{n-1} [ b_1, c_k,c_{k+1}],$$
which can be rewritten as:
$$ V = [b_1,b_2,b_3] +  [b_1, b_2+b_3,b_4] + [b_1, b_2+b_3 + b_4 ,b_5] + \cdots
        [b_1,b_2 + \cdots b_{n-1}, b_n] .$$
        \end{dfn}
N.B. Note that this signed volume is essentially the signed area of
the projection onto the plane orthogonal to $b_1$.

\begin{lemma} {\bf (Mirror lemma)}
Let two arms differ on a permutation of the arms $2,\dots,n$. Then
there  exits a bijection (by 'mirror-symmetry') between their
"moduli spaces" which preserves the signed volume function.
Consequently this bijection preserves critical points and their
local (Morse) types.
\end{lemma}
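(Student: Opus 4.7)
The plan is to put the signed volume into an explicitly pairwise form and then construct the bijection for adjacent transpositions, from which the general case follows by composition. Expanding $[b_1,c_k,c_{k+1}]$ by multilinearity of the triple product and using $[b_1,b_1,\cdot]=0$, one obtains
$$V \;=\; \sum_{2\le k<l\le n}[b_1,b_k,b_l],$$
which is the form in which invariance is easiest to check. Since every permutation of $\{2,\dots,n\}$ factors as a product of adjacent transpositions, it suffices to treat the swap of edges at positions $i$ and $i+1$.

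For such a transposition the proposed bijection is the following configuration-dependent involution. Given $(b_1,b_2,\dots,b_n)$, let $\mu$ be the orthogonal reflection across the plane $\Pi$ spanned by $b_1$ and $b_i+b_{i+1}$, and define
$$\Phi(b_2,\dots,b_n)\;:=\;(b_2,\dots,b_{i-1},\;\mu b_{i+1},\;\mu b_i,\;b_{i+2},\dots,b_n).$$
Because $\mu$ is an isometry fixing both $b_1$ and $b_i+b_{i+1}$, the image has edge lengths $(l_1,\dots,l_{i-1},l_{i+1},l_i,l_{i+2},\dots,l_n)$ and the same overall endpoint (since $\mu b_i+\mu b_{i+1}=\mu(b_i+b_{i+1})=b_i+b_{i+1}$). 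The plane $\Pi$ depends only on $b_1$ and the sum $b_i+b_{i+1}$, both preserved by $\Phi$, so $\Phi^2=\mathrm{id}$; and $\Phi$ is $SO(3)$-equivariant, hence descends to the moduli spaces.

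Invariance of $V$ is then a three-case check on the pairwise sum. Terms $[b_1,b_k,b_l]$ with $\{k,l\}$ disjoint from $\{i,i+1\}$ are untouched; the single term $[b_1,b_i,b_{i+1}]$ becomes $[b_1,\mu b_{i+1},\mu b_i]=-[b_1,b_{i+1},b_i]=[b_1,b_i,b_{i+1}]$, using $\det\mu=-1$ and $\mu b_1=b_1$; and for any cross index $k\notin\{i,i+1\}$ the two affected contributions combine by linearity as $[b_1,b_k,\mu b_i+\mu b_{i+1}]=[b_1,b_k,b_i+b_{i+1}]$, recovering the original sum. Thus $V\circ\Phi=V$.

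The main obstacle I anticipate is the degenerate locus where $b_1$ and $b_i+b_{i+1}$ are collinear: here the plane $\Pi$ is not unique and $\Phi$ must be defined by a continuous (or piecewise-smooth) limit. This locus has positive codimension in the moduli space, and one should argue that $\Phi$ extends across it without disturbing the invariance computation above (indeed every cross-term contribution to the change in $V$ is controlled by the preserved vector $b_i+b_{i+1}$). Once $\Phi$ is realized as a $V$-preserving self-homeomorphism that is a local diffeomorphism off this degenerate locus, the final statement is automatic: $V$-preservation sends critical points to critical points, and local smoothness conjugates Hessians, so Morse indices are preserved.
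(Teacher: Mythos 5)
Your reduction to adjacent transpositions, the pairwise formula $V=\sum_{2\le k<l\le n}[b_1,b_k,b_l]$, and the invariance computation for the reflection $\mu$ across $\Pi=\mathrm{span}(b_1,\,b_i+b_{i+1})$ are all correct, and this is the natural generalization of the planar mirror that the paper itself invokes (its own proof is only the one-line reference ``as in the planar case''). The genuine gap is exactly at the point you flag and then set aside: $\Phi$ does \emph{not} extend continuously across the degenerate locus $D=\{\,b_i+b_{i+1}\in\mathbb{R}b_1\,\}$. Concretely, take $b_1=e_1$, $l_i=l_{i+1}=1$, and approach the configuration $(b_i,b_{i+1})=(e_2,-e_2)\in D$ along $b_i=\cos t\,e_2+\sin t\,e_3$, $b_{i+1}=-e_2$: then $\Pi\to\mathrm{span}(e_1,e_3)$ and $\Phi\to(e_2,-e_2)$. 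Approaching instead along $b_i=\cos t\,e_2+\sin t\,e_1$, $b_{i+1}=-e_2$ gives $\Pi=\mathrm{span}(e_1,e_2)$ for all small $t\ne 0$ and $\Phi\to(-e_2,e_2)$. The limit depends on the direction of approach, so no continuous extension exists. Moreover $D$ cannot be dismissed as a negligible positive-codimension set, because $b_r+b_{r+1}\in\mathbb{R}b_1$ is precisely the parallel condition $P_r$: the zigzag-type critical configurations of Theorem \ref{thm:crit-arms} lie \emph{on} $D$, i.e.\ the lemma is applied exactly where your map fails to be even continuous. (Note the contrast with the planar case, where the mirror degenerates only when $b_i+b_{i+1}=0$, which forces $l_i=l_{i+1}$.)

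What survives of your argument: on $D$ one has $[b_1,b_i,b_{i+1}]=[b_1,b_i,b_i+b_{i+1}]=0$, and $V$ depends on the pair $(b_i,b_{i+1})$ only through the sum $b_i+b_{i+1}$ and this bracket; hence \emph{some} $V$-preserving, length-swapping bijection always exists (choose any bijection of the circles of pairs with prescribed sum fibrewise over $D$). But a merely set-theoretic bijection does not transport critical points or Hessians, so the final sentence of the lemma --- the part actually used later --- is not established at the configurations that matter. To close the gap you would need either to produce a mirror that remains a local diffeomorphism near $D$, or to bypass the map altogether and verify directly that the critical systems $[b_1,\,b_2+\cdots+b_{r-1}-(b_{r+1}+\cdots+b_n),\,\dot b_r]=0$ for the two orderings are carried into one another, e.g.\ by analyzing an arbitrary interleaving of ortho and parallel conditions without first permuting them to the end.
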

\begin{proof}
As in the planar case \cite{kpsz2}.
\end{proof}

The conditions for critical points are:
$$ [b_1,\dot{b_2}, b_3] + [b_1,\dot{b_2},b_4] + \cdots + [b_1,\dot{b_2}, b_n]
 = [b_1, \dot{b_2}, b_3 + \cdots + b_n] = 0 .$$
$$  [b_1,b_2,\dot{b_3}] + [b_1,\dot{b_3},b_4] + \cdots [b_1, \dot{b_3},b_n]
   = [b_1, b_2 - (b_4 + \cdots + b_n), \dot{b_3}] = 0 .$$

The $r^{th}$ -derivative gives the following:
$$ [b_1, b_2+ \cdots + b_{r-1}, \dot{b_r}] + [b_1,\dot{b_r},b_{r+1}] + \cdots +  [b_1,\dot{b_r},b_n] = $$
$$ = [b_1, b_2 + \cdots  + b_{r-1} - (b_{r+1} + \cdots + b_n), \dot{b_r}] = 0 .$$

There are two cases for any $2 \le r \le n$ (which we call {\it
ortho} and {\it parallel}):
\begin{itemize}
\item[$\bullet$ case $O_r$:]
$$ b_1 \times (( b_2 + \cdots  + b_{r-1}) - (b_{r+1} + \cdots + b_n)) \ne 0 .$$
Hence  we have the following orthogonalities
$$ b_r \perp b_1 \  \wedge \ \
 b_r \perp (b_2 + \cdots + b_{r-1}) - (b_{r+1}+ \cdots + b_{n}). $$
\item[$\bullet$ case $P_r$:]  $$ b_1 \times (( b_2 + \cdots  + b_{r-1}) - (b_{r+1} + \cdots + b_n)) = 0 ,$$
which means that $(b_2 + \cdots  + b_{r-1}) - (b_{r+1} + \cdots +
b_n) \in  {\mathbb R} b_1.$
\end{itemize}

Next we decompose vectors into  their ${\mathbb R} b_1$-component
and its orthogonal complement:
$$ b_r = b_r' + b_r^{\perp}$$

\begin{lemma}
For all $r= 2, \cdots , n$:
$$ b_r^{\perp} \perp (b_2^{\perp} + \cdots + b_{r-1}^{\perp}) - (b_{r+1}^{\perp}+ \cdots + b_{n}^{\perp}) $$
and also
$$ (b_2^{\perp} +  \cdots + b_{r-1}^{\perp} ) \perp (b_r^{\perp} + \cdots b_n^{\perp} )\ \ \ (*) $$
\end{lemma}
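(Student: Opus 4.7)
The plan is to dispose of the two orthogonality statements separately, handling the first one by case analysis on the two possible flavours of critical-point conditions $O_r$ and $P_r$, and then deducing $(*)$ from the first one by a short induction on $r$.

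For the first statement, I would split on the type of critical point at index $r$. In case $O_r$, the assumption $b_r\perp b_1$ forces the $\mathbb R b_1$-component of $b_r$ to vanish, so $b_r=b_r^\perp$; the companion orthogonality then reads $b_r\perp (b_2+\cdots+b_{r-1})-(b_{r+1}+\cdots+b_n)$, and because $b_r^\perp$ is orthogonal to $b_1$ it kills the $\mathbb R b_1$-parts of every summand, leaving exactly $b_r^\perp\perp (b_2^\perp+\cdots+b_{r-1}^\perp)-(b_{r+1}^\perp+\cdots+b_n^\perp)$. In case $P_r$, the difference $(b_2+\cdots+b_{r-1})-(b_{r+1}+\cdots+b_n)$ lies in $\mathbb R b_1$, so its perpendicular component is zero and the claimed orthogonality is vacuous.

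For $(*)$, I would set $A_r=b_2^\perp+\cdots+b_{r-1}^\perp$ and $B_r=b_{r+1}^\perp+\cdots+b_n^\perp$, so that the goal at index $r$ becomes $A_r\perp(b_r^\perp+B_r)$. The base case $r=2$ is trivial since $A_2=0$. For the inductive step from $r$ to $r+1$, observe that $A_{r+1}=A_r+b_r^\perp$ and $b_{r+1}^\perp+B_{r+1}=B_r$, so the target is $(A_r+b_r^\perp)\perp B_r$, i.e.\ $\langle A_r,B_r\rangle+\langle b_r^\perp,B_r\rangle=0$. The induction hypothesis says $\langle A_r,b_r^\perp\rangle+\langle A_r,B_r\rangle=0$, hence $\langle A_r,B_r\rangle=-\langle A_r,b_r^\perp\rangle$; and the first part of the lemma at index $r$ gives $\langle b_r^\perp,A_r\rangle=\langle b_r^\perp,B_r\rangle$. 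Substituting the second identity into the first causes the two remaining inner products to cancel.

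The one delicate point is the index bookkeeping: one must check that transferring the vector $b_r^\perp$ from the ``right'' sum to the ``left'' sum is exactly the transition from $(*)_r$ to $(*)_{r+1}$, and that the orthogonality from the first statement used in the step is indexed by the same $r$ as the inductive hypothesis. Once this is lined up, the inductive step reduces to the one-line manipulation above, and no further use of the dichotomy between $O_r$ and $P_r$ is needed beyond establishing the first statement.
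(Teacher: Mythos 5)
Your proof is correct. The paper states this lemma without proof, and your argument --- the $O_r$/$P_r$ case split for the first orthogonality (projection kills the $\mathbb{R}b_1$-components in the ortho case, the difference vector has zero perpendicular part in the parallel case) followed by the telescoping induction for $(*)$ --- is exactly the derivation the paper's preceding critical-point analysis is set up to supply, so nothing further is needed.
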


For any critical point of the signed volume function on n-arms in
${\mathbb R}^3$ one can consider the projection of the arm onto the
hyperplane orthogonal to $b_1$.
\begin{prop}
The vertices of this planar $(n-1)$-arm $b_2^{\perp}, \dots,
b_n^{\perp}$ lie on a circle with diameter the interval
$B_1B_n^{\perp}$ from the start point to the end point of this arm.
This configuration corresponds to a critical point of such arms (but
with fixed lengths) under the signed area function.\qed
\end{prop}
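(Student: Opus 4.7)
The plan is to interpret the orthogonality relation $(*)$ of the preceding lemma geometrically, via the converse of Thales' theorem, as diacyclicity of the projected arm, and then invoke Theorem \ref{Thm_critical_are_cyclic} in the plane to deduce the criticality statement.

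First I would observe that $b_1^{\perp}=0$, since $b_1$ is normal to the projection hyperplane; hence the projected $(n-1)$-arm has vertex sequence $B_1^{\perp}=O$, $B_2^{\perp}=b_2^{\perp}$, $\dots$, $B_n^{\perp}=b_2^{\perp}+\cdots+b_n^{\perp}$, with consecutive edges $b_2^{\perp},\dots,b_n^{\perp}$ of the fixed lengths $|b_r^{\perp}|$ determined by the critical configuration in $\mathbb R^3$. So the projection is genuinely an $(n-1)$-arm in the plane orthogonal to $b_1$, with the endpoints of the putative diameter being $B_1^{\perp}$ and $B_n^{\perp}$.

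Next, for each interior vertex $B_r^{\perp}$ with $2\le r\le n-1$, the two chord vectors from it to those endpoints are $B_r^{\perp}-B_1^{\perp}=b_2^{\perp}+\cdots+b_r^{\perp}$ and $B_r^{\perp}-B_n^{\perp}=-(b_{r+1}^{\perp}+\cdots+b_n^{\perp})$. Relation $(*)$ of the lemma, applied with index shifted by one (i.e.\ $r\mapsto r+1$), asserts exactly that these two vectors are perpendicular. By the converse of Thales' theorem, each interior vertex then lies on the circle with diameter $B_1^{\perp}B_n^{\perp}$, which establishes the first (purely geometric) assertion of the proposition.

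For the criticality claim, I would appeal to the planar Theorem \ref{Thm_critical_are_cyclic}: within the moduli space of planar $(n-1)$-arms of fixed edge lengths, critical points of the signed area function are generically exactly the diacyclic configurations. Having just exhibited the projected arm as diacyclic with respect to the diameter $B_1^{\perp}B_n^{\perp}$, this closes the argument. The main subtlety I anticipate lies in the degenerate case where some $b_r$ is parallel to $b_1$ so that $b_r^{\perp}=0$: the Thales condition at that index becomes vacuous and the projected arm has a zero-length edge, so such configurations must be understood as sitting in the non-generic locus covered by the word ``generically'' in Theorem \ref{Thm_critical_are_cyclic}.
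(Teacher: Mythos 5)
Your proof is correct and is exactly the argument the paper intends (the paper marks the proposition \qed{} as an immediate consequence of the preceding lemma): the relation $(*)$ at index $r+1$ is precisely the Thales perpendicularity at the vertex $B_r^{\perp}$, and the criticality claim is the planar Theorem \ref{Thm_critical_are_cyclic}. Your closing caveat about vanishing projections $b_r^{\perp}=0$ matches the paper's own remark following the proposition that projections may be degenerate and lengths are not fixed.
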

Note that in general we don't have  fixed lengths of the projections
and that projections can be "degenerate".

We next treat  several cases of the spatial situations and after
that state the general result in Theorem \ref{thm:crit-arms}.

\begin{subsection}
{\bf Full ortho case: }$O_r$ for all $r=2,\dots,n .$\\
Now $b_r = b_r^{\perp}$. So we have:
\begin{stat}
The  critical points of the signed volume function on n-arms in
${\mathbb R}^3$ are exactly those configurations, where all vertices
(including the first $O$ and the last $B_r$) are on a sphere with
diameter $OB_r$; the first arm is perpendicular to the all other
arms. Delete the first arm: the vertices of this planar $(n-1)$-arm
lie on a circle with $B_1B_r$ as the diameter. This configuration
corresponds precisely to a critical point of such arms under the
signed area function. Moreover,
$$V = |b_1|\cdot sA.$$

\end{stat}
\end{subsection}

\begin{subsection}
{\bf Full parallel case:} $P_r$ for all $r=2,\dots,n.$\\
\newline If $n$ is odd we find
$b_r \in {\mathbb R}b_1$ ($r=2,\dots, n$). \newline If $n$ is even
we find $b_r + b_{r+1} \in {\mathbb R}b_1$ ($r=2,\dots, n-1$).
\begin{stat}
Critical points of $V$ are aligned configurations if $n$ is  odd and
1-parameter families of zigzags if $n$ is even. Zigzags are arms,
which project all to the same interval (see Fig. \ref{FigA}, right).
\end{stat}
Zigzags also contain the aligned configuration. In a zigzag the
lengths of the projections can vary the  from 0 to
the minimum lengths of $b_2,\dots,b_r$.\\
Both full cases (see Fig. \ref{FigA}) have the property that
solutions exists for all length vectors.
\end{subsection}

\begin{figure}
\centering
\includegraphics[width=12 cm]{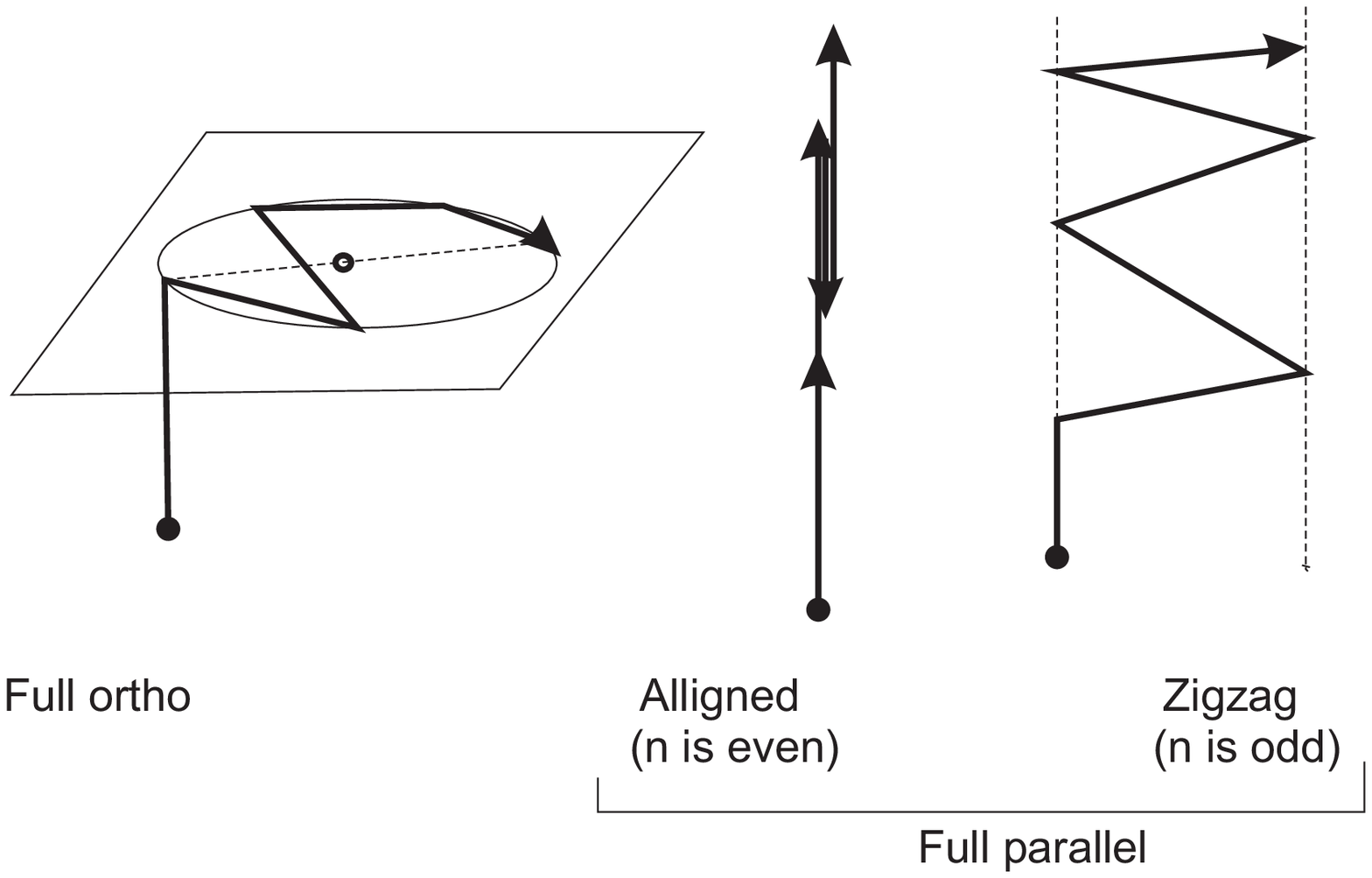}
\caption{ }\label{FigA}
\end{figure}

\subsection{ General case: $n-k$ parallel conditions, and  $k-1$ ortho conditions}
We can assume (due to the mirror lemma) that the last $n-k$
conditions are parallel. That is, we have
$$ b_2 + \cdots + b_{k} + b_{k+1}^{\perp} + \cdots +b_{n-1}^{\perp} = 0 $$
together with
$$ b_{k+1} + b_{k+2} \in {\mathbb R}b_1 ,\cdots , b_{n-1} + b_n \in {\mathbb R}b_1 .$$
So
$$ b_{k+1}^{\perp} + b_{k+2}^{\perp} = 0 ,\cdots , b_{n-1}^{\perp} + b_n^{\perp} = 0. $$
This has the following consequences:

\begin{itemize}
\item The $b_{k+1}^{\perp}, \cdots, b_{n}^{\perp}$ are diameters of the critical circle,
\item If $n-k$ is even, then $ b_2 + \cdots + b_{k} + b_{k+1}^{\perp} = 0 $.\\
The ($k-1$)-arm $b_2,\cdots,b_{k}$ is an open planar diacyclic chain
(\textit{diameter condition}).
\item If $n-k $ is odd, then $ b_2 + \cdots + b_{k} = 0 $.
The ($k-1$)-arm $b_2,\cdots,b_{n-k-1}$ is a closed planar cyclic polygon
(\textit{closing condition}).
\end{itemize}
In both cases (odd and even) the projections of the vertices lie on
a circle (see Fig. \ref{FigB}). There are only finite number of
these circles possible. For a
 realization it is necessary that $|b_i| \ge R$ (radius of circle) if $k+1 \le i \le n$.

\begin{figure}
\centering
\includegraphics[width=12 cm]{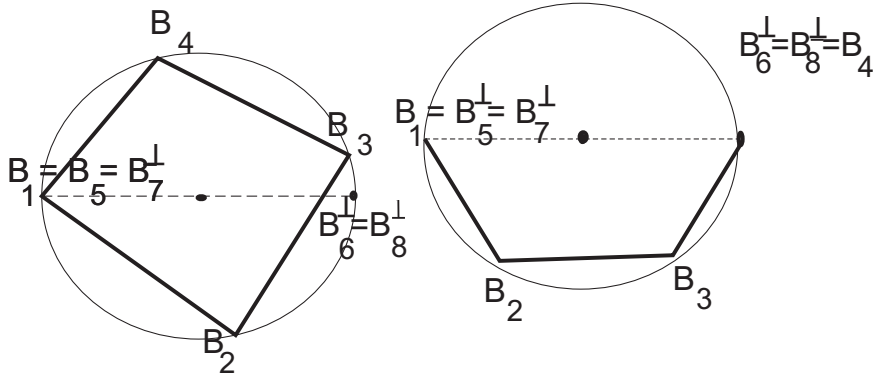}
\caption{Projected vertices are on a circle. }\label{FigB}
\end{figure}


 The above discussion shows the following:
\begin{thm} \label{thm:crit-arms}
The critical points of $\ V$  up to "mirror-symmetry" are as follows (see Fig. \ref{Figmixed}): \\
There exits a division of the n-arm into a sub-arm $b_1$, a subarm
$b_2,\dots,b_k$ and a subarm $b_{k+1},\dots,b_n$ such that:
\begin{itemize}
\item $b_1$ is orthogonal to each of $b_2,\dots,b_k$ (which lie in a plane ${\mathbb
R}b_1^{\perp}$).
\item The vertices of the arm  $b_2,\dots,b_k$ lie on a circle, satisfying
\begin{itemize}
\item the closing condition if $n-k=$ odd,
\item the diameter condition if $n-k $ = even.
\end{itemize}
\item The arm $b_{k+1},\dots,b_n$ is a zigzag, which projects orthogonally to the diameter of the circle.\qed
\end{itemize}

\end{thm}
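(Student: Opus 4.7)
The plan is to assemble the statement from the three intermediate results already on the table: the critical point equations derived for the $r$-th derivative, the full ortho and full parallel analyses, and the projection lemma that reduces any critical configuration to a planar critical configuration of the signed area.

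First I would invoke the Mirror Lemma to permute the arms $b_2,\dots,b_n$ so that all ortho conditions $O_r$ appear for indices $2\le r\le k$ and all parallel conditions $P_r$ appear for indices $k+1\le r\le n$. This reorganization is harmless because the lemma preserves the signed volume and the local Morse type, so proving the structural statement in this canonical ordering proves it in general up to mirror symmetry.

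Next I would process the tail ($k+1\le r\le n$). Each $P_r$ says $(b_2+\cdots+b_{r-1})-(b_{r+1}+\cdots+b_n)\in\mathbb{R}b_1$, so its orthogonal projection vanishes. Taking successive differences between $P_r$ and $P_{r+1}$ gives $b_{r}^{\perp}+b_{r+1}^{\perp}=0$ for every such $r$, which is exactly the zigzag condition: the projected arm $b_{k+1}^\perp,\dots,b_n^\perp$ collapses onto a single interval, with consecutive projections equal in length and opposite in direction. Summing these pairwise cancellations one obtains, depending on the parity of $n-k$, either a residual term $b_{k+1}^\perp$ (even case) or no residual (odd case). Feeding this back into $P_{k+1}$, which reads $b_2+\cdots+b_{k}+b_{k+1}^\perp+\cdots+b_{n-1}^\perp=0$, yields the diameter condition $b_2+\cdots+b_k+b_{k+1}^\perp=0$ in the even case and the closing condition $b_2+\cdots+b_k=0$ in the odd case.

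Then I would treat the head ($2\le r\le k$). Each $O_r$ says $b_r\perp b_1$, so $b_2,\dots,b_k$ lie in the hyperplane $\mathbb{R}b_1^{\perp}$ and coincide with their projections. Applying the projection proposition immediately preceding the theorem, the planar chain $b_2^\perp,\dots,b_n^\perp$ is critical for the signed area, hence its vertices lie on a common circle whose diameter is the segment from $B_1^\perp=B_1$ to $B_n^\perp$. Combined with the previous paragraph, this circle contains the endpoints of the $(k-1)$-arm $b_2,\dots,b_k$ and the zigzag $b_{k+1},\dots,b_n$ projects orthogonally onto its diameter, giving all three bullet points of the theorem.

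The main obstacle I expect is bookkeeping rather than conceptual: carefully deducing $b_r^\perp+b_{r+1}^\perp=0$ from consecutive parallel conditions, and verifying that the residual projection $b_{k+1}^\perp$ arising in the even case is precisely the diameter of the circle produced by the projection proposition, so that the diameter condition and the closing condition match the statement without off-by-one errors in the parity of $n-k$.
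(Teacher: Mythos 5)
Your proposal is correct and follows essentially the same route as the paper: the theorem there is stated as a summary of the preceding discussion, namely using the Mirror Lemma to place the parallel conditions $P_{k+1},\dots,P_n$ last, differencing consecutive parallel conditions to get $b_r^{\perp}+b_{r+1}^{\perp}=0$ (the zigzag), reading off the closing or diameter condition from the parity of $n-k$, and invoking the projection proposition to place the projected vertices on a circle. The only cosmetic slips are attributing the relation $b_2+\cdots+b_k+b_{k+1}^{\perp}+\cdots+b_{n-1}^{\perp}=0$ specifically to $P_{k+1}$ (it is the projected form of one of the critical equations, but either bookkeeping yields the same conclusion) and writing $B_1^{\perp}=B_1$ for the start point of the projected arm; neither affects the argument.
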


\begin{figure}
\centering
\includegraphics[width=12 cm]{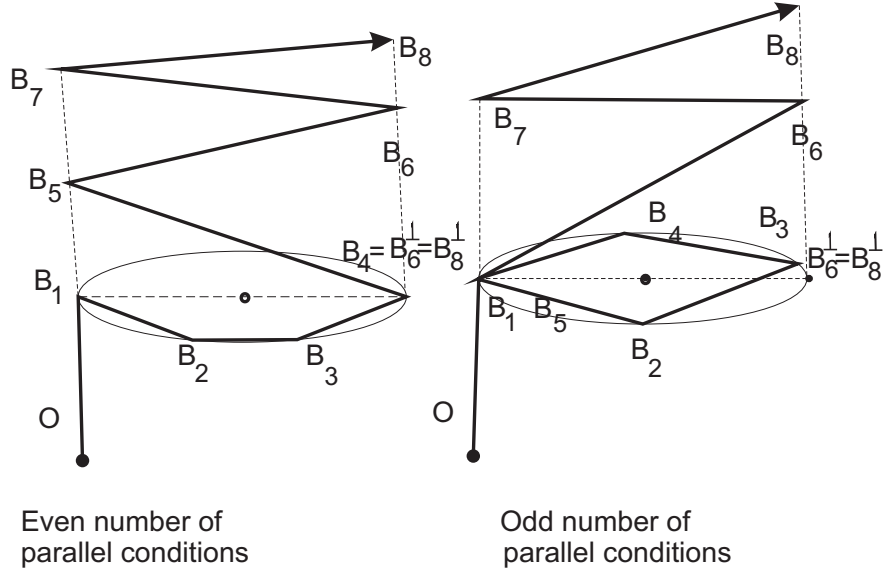}
\caption{ Solutions in the general case.}\label{Figmixed}
\end{figure}

\section{About n-arms in ${\mathbb R^3}$; projection on planes}

 As mentioned befor the  signed volume is essentially the signed area of
the projection onto the plane orthogonal to $b_1$. The same reasoning can be applied to more general projections.
We consider in ${\mathbb R^3}$ a vector $p$, which is the direction of
the orthogonal projection on a plane ${\mathbb R}p^{\perp}.$

Let the n-arm be given by the vectors $ b_1,\cdots, b_n$. The
vertices are $O,B_1,\cdots, B_n$.

Define the signed Projected Area function as follows:
$$ PA = [p,b_1,b_2] +[p,b_1+b_2,b_3] +  [p, b_1+ b_2 + b_3,b_4] + $$
$$[p, b_1+ b_2+ b_3 + b_4 ,b_5] + \cdots + [p, b_1+ \cdots + b_{n-1},
b_n] . $$

{\bf We fix first both the positions of $p$ and $b_1$!}.\\
We assume that $p \times b_1 \ne 0$.\\

\begin{thm} \textbf{(Projection with fixed $p$ and $b_1$)}
The critical points of $PA$  up to "mirror-symmetry" are as follows: \\
There exits a division of the n-arm into two subarms $b_1,\dots,b_k$
and $b_{k+1},\dots,b_n$, such that:
\begin{itemize}
\item The vertices of the arm  $b_1^{\perp},b_2,\dots,b_k$ lie on a circle in the projection plane, satisfying
\begin{itemize}
\item the closing condition if $n-k=$ odd,
\item the diameter condition if $n-k $ = even.
\end{itemize}
\item The arm $b_{k+1},\dots,b_n$ is a zigzag, which projects orthogonally to the diameter of the circle.
\end{itemize}
\end{thm}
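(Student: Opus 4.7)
The strategy mirrors that of Theorem \ref{thm:crit-arms}, with the fixed vector $p$ now playing the role that $b_1$ played there: $PA$ is, up to sign, the signed area of the image of the arm under orthogonal projection onto $\mathbb{R}p^{\perp}$. The difference from the earlier setting is that the first edge $b_1$ is prescribed (but not discarded), so its projection $b_1^{\perp}$ becomes a \emph{fixed nonzero} first edge of the planar projected arm rather than $0$.

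I would begin by differentiating $PA$ with respect to each variable arm $b_r$ for $r=2,\ldots,n$, each lying on a sphere of radius $|b_r|$. The same telescoping that produced the critical equations in Section 4 works verbatim and yields
\[
[\,p,\ X_r,\ \dot b_r\,] = 0 \quad \text{for every } \dot b_r\perp b_r, \qquad X_r := (b_1+\cdots+b_{r-1})-(b_{r+1}+\cdots+b_n).
\]
Rewriting this as $(p\times X_r)\cdot\dot b_r=0$ produces the familiar dichotomy for each $r\geq 2$: the ortho case $O_r$ (forcing $b_r\perp p$ and $b_r\perp X_r$) when $p\times X_r\neq 0$, and the parallel case $P_r$ (forcing $X_r\in\mathbb{R}p$) otherwise. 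Using the Mirror Lemma applied to the projected planar arm (which goes through verbatim because $PA$ is a signed area after projection), I would reorder so that $O_r$ holds for $r=2,\ldots,k$ and $P_r$ holds for $r=k+1,\ldots,n$.

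In the ortho block one has $b_r=b_r^{\perp}$ for $r\leq k$, and the ortho conditions project to the planar signed-area criticality equations for the arm $b_1^{\perp},b_2,\ldots,b_k$ (whose edge lengths are all fixed because $b_1$ is fixed and the $b_r$ for $2\leq r\leq k$ now lie in $\mathbb{R}p^{\perp}$). In the parallel block the differences $P_{r+1}-P_r$ give $b_r+b_{r+1}\in\mathbb{R}p$, hence $b_r^{\perp}+b_{r+1}^{\perp}=0$: a zigzag in projection. The interface equation $P_{k+1}$, after projection and using the alternating zigzag cancellations in the tail $b_{k+2}^{\perp},\ldots,b_n^{\perp}$, collapses to $b_1^{\perp}+b_2+\cdots+b_k=0$ when $n-k$ is odd (the \emph{closing condition}) and to $b_1^{\perp}+b_2+\cdots+b_k+b_{k+1}^{\perp}=0$ when $n-k$ is even (the \emph{diameter condition}). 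Invoking Theorem \ref{Thm_critical_are_cyclic} for the planar arm $b_1^{\perp},b_2,\ldots,b_k$ then yields the cyclic/diacyclic conclusion on a common circle in $\mathbb{R}p^{\perp}$, while the zigzag tail projects orthogonally onto the associated segment as asserted.

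The delicate step I expect to be the main obstacle is the bookkeeping at the ortho-parallel interface: tracking precisely how the single equation $P_{k+1}$, combined with the alternating signs of the zigzag projections, reduces to either the closing or the diameter relation according to the parity of $n-k$, and then matching the "diameter" in the even case with the line through which the zigzag projects. Once that is pinned down, Theorem \ref{Thm_critical_are_cyclic} finishes the argument.
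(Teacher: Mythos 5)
Your proposal is correct and follows essentially the same route as the paper, whose proof of this theorem is simply a reference back to the argument for Theorem \ref{thm:crit-arms}: the same telescoped criticality equations $[p,X_r,\dot b_r]=0$, the ortho/parallel dichotomy per index, the mirror-lemma reordering, and the parity analysis at the interface, with $b_1^{\perp}$ entering as a fixed first edge of the projected planar arm. Your parenthetical correction that the ortho case forces $b_r\perp p$ and $b_r\perp X_r$ (rather than conditions on $p$ itself) is the right reading.
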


\begin{proof}
As in the signed volume case, see Theorem \ref{thm:crit-arms}.
\end{proof}
{\bf Remark 1.} The special case that $p$ is orthogonal to $b_1$ is
included.
In this case we obviously have $b_1^{\perp} = b_1$.\\
If $p$ is parallel to $b_1$ we are in the case of signed volume studied before.\\

{\bf Remark 2.} If we fix only $p$ and not $b_1$ the study of the
signed projected area of the $n$-arm $b_1, \dots, b_n$  is
equivalent to that of the signed volume of the $(n+1)$-arm $p, b_1,
\dots, b_n$.
 We state this:

\begin{thm}\textbf{(General projection on plane)}
The critical points of $PA$  up to "mirror-symmetry" are as follows: \\
There exits a division of the n-arm into two subarms $b_1,\dots,b_k$
and $b_{k+1},\dots,b_n$, such that:
\begin{itemize}
\item The vertices of the arm  $b_1,b_2,\dots,b_k$ lie on a circle in the projection plane, satisfying
\begin{itemize}
\item the closing condition if $n-k=$ odd,
\item the diameter condition if $n-k $ = even.
\end{itemize}
\item The arm $b_{k+1},\dots,b_n$ is a zigzag, which projects orthogonally to the diameter of the circle.\qed
\end{itemize}
\end{thm}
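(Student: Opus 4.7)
The plan is to reduce the theorem to Theorem \ref{thm:crit-arms} via Remark 2. The key observation is the algebraic identity
$$PA(b_1,\dots,b_n) \;=\; V(p,b_1,\dots,b_n),$$
which holds term-by-term once one unfolds the definition of the signed volume of the $(n+1)$-arm $p,b_1,\dots,b_n$: the $r$-th summand $[p,\,b_1+\cdots+b_{r-1},\,b_r]$ of the right-hand side is precisely the $r$-th summand of $PA$. I would begin by verifying this identity, which is routine.

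Because $p$ is held fixed in both variational problems, the critical-point equations for $PA$ with respect to $b_1,\dots,b_n$ coincide with the critical-point equations for $V$ applied to the $(n+1)$-arm $p,b_1,\dots,b_n$ (whose first arm is held fixed and whose last $n$ arms are varied). These are exactly the equations analyzed in the proof of Theorem \ref{thm:crit-arms}, so I can invoke that theorem directly. Under the relabeling, the distinguished ``first arm'' is $p$, and the theorem produces a splitting of $b_1,\dots,b_n$ into a sub-arm $b_1,\dots,b_k$ whose vectors are all orthogonal to $p$ and whose vertices lie on a circle (satisfying the closing condition if $n-k$ is odd and the diameter condition if $n-k$ is even), followed by a zigzag sub-arm $b_{k+1},\dots,b_n$ projecting orthogonally to the diameter of that circle.

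Since $b_1,\dots,b_k$ are orthogonal to $p$, they coincide with their projections onto $\mathbb{R}p^{\perp}$, so the circle they lie on is literally a circle in the projection plane, which is exactly the asserted conclusion. The only bookkeeping point I would want to confirm carefully is that the ``up to mirror-symmetry'' clause transfers correctly: the Mirror Lemma for the $(n+1)$-arm $p,b_1,\dots,b_n$ permutes arms indexed $2,\dots,n+1$, which under the relabeling are precisely the variable arms $b_1,\dots,b_n$, so the quantifier survives the reduction unchanged. The main obstacle is essentially cosmetic, namely keeping the index shift and the mirror-symmetry quantifier compatible; once that is in place, the theorem is an immediate corollary of Theorem \ref{thm:crit-arms}.
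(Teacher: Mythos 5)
Your reduction is exactly the paper's own argument: the paper proves this theorem by Remark~2, identifying $PA$ of the $n$-arm $b_1,\dots,b_n$ with the signed volume $V$ of the $(n+1)$-arm $p,b_1,\dots,b_n$ and invoking Theorem~\ref{thm:crit-arms}, which is precisely your route (including the index shift making the parity condition $(n+1)-(k+1)=n-k$ come out right). The proposal is correct and adds only a more careful spelling-out of the term-by-term identity and the mirror-symmetry bookkeeping than the paper bothers to record.
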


\section{Gram matrices and moduli space}\label{SectionGram}

One way to study the moduli space of $n$-arms in ${\mathbb R}^n$ is
to use the Gram matrix. This has an advantage that there is a direct
relation with the volume.

Given a set  of  vectors, the Gram matrix $ G$ is the matrix of all
possible inner products. Let $B$ be the matrix whose columns are the
arm vectors $b_1,\dots,b_n$. Then the Gram matrix is $G=B^{t}B$. Its
determinant is the square of the volume of the simplex spanned by
these vectors:
   $$\det G = (V)^2.$$

The  Gram matrix is always a positive semi definite symmetric matrix and any positive semi definite symmetric matrix is the  Gram matrix of some $B$ .
If G is positive definite it determines $B$  up to isometry.  

 In our case of $n$-arm in ${\mathbb R}^n$ the inner
products $(b_i.b_i)$ are the fixed numbers $b_i^2$. The other
entries of the Gram matrix we consider as variables $x_{ij}$. Its
determinant is:
$$
\begin{vmatrix}

b_1^2  & x_{12} & x_{13} &   &  & & x_{1n}\\

x_{12}  &b_2^2 & x_{23} &    &   & &x_{2n}\\

x_{13} &  x_{23}& b_3^2 &   &   &    & x_{3n} \\
       &        &       &   &  x_{ij}  &   &      \\
       &        &       & x_{ij}  & & &    \\
       &        &       &    &   &  & &   \\
x_{1n} &        &       &    &   &  &   b_n^2 \\

\end{vmatrix}
$$
For a given  $n$-arm, Gram matrix is contained in a subspace of
dimension $\frac{n(n-1)}{2}$. \\

{\bf Remark.} Note that the equivalence is only up to isometry and
not with respect to orientation. On the set {\small GRAM} of all
Gram matrices we will consider $|V|$. In order to treat the oriented
version we have to take two copies of {\small GRAM} and to glue it
on the common boundary. The set {\small GRAM} is contained in the product of
intervals $-b_ib_j \leq x_{ij} \leq b_ib_j$.

In \cite{merm} diagonals are used as coordinates of the moduli
space. {\small GRAM} is related to that description by the cosine rule:
$$ d_{ij}= b_i^2 + b_j^2 - 2x_{ij}. $$
Note that $G$ is differentiable on the entire space
$\mathbb{R}^{n(n-1)/2}$. In turn, $|V|$ is defined on {\small GRAM},
but is only differentiable on the interior $\{|V|>0\}$. What happens
on the boundary?\\

\noindent
We consider next the 3 dimensional case and use the notations from section \ref{3arm}.
$$
\det G =
\begin{vmatrix}
a^2  & z & y\\
z  & b^2 & x \\
y &  x & c^2 \\
\end{vmatrix}  = 2xyz - a^2x^2 - b^2y^2 - c^2 z^2 + a^2b^2c^2 = 0
$$
In figure \ref{fig:gram} this equation  is visualized. 
Note that {\small GRAM} is equal to the intersection $\{\det G \ge 0\}$ with the box defined by $\{|x|<bc,\; |y| < ac,\; |z|< ab \}$. The boundary of the box intersects $\det G = 0$ only in four points.

The critical points of $\det G$ are given by the conditions\\
$\partial \det G / \partial x = 2 (yz- a^2x) = 0 $ , \\
$\partial \det G / \partial y = 2 (xz - b^2y) = 0 $ ,\\
$\partial \det G / \partial z = 2 ( xy - c^2z) = 0 .$\\

We find the following critical points of $\det G$:
\begin{itemize}
\item $(x,y,z) = (0,0,0)$  : maximum $a^2b^2c^2$ (index 3)
\item $(x,y,z) = (bc,ac,ab), (-bc, ac,-ab), (-bc, -ac, ab)$ or $(bc, -ac, - ab)$
(just the four intersection points mentioned above).

The critical value is equal to $ 0$. What are the types of these 4
critical points?
We compute the Hessian matrix and its determinant:\\
$$
\det H =
\begin{vmatrix}
-a^2  & z & y\\
z  & -b^2 & x \\
y &  x & -c^2 \\
\end{vmatrix}
$$

Note that $\det H(x,y,z) = - \det G(-x,-y,-z)$.\\
Each of our 4 critical points is non-degenerate; since $\det H \neq
0$. The Morse index is 2. Note also that they are related to aligned
situations.
\end{itemize}

\begin{figure} \label{fig:gram}
\centering
\includegraphics[width=10 cm]{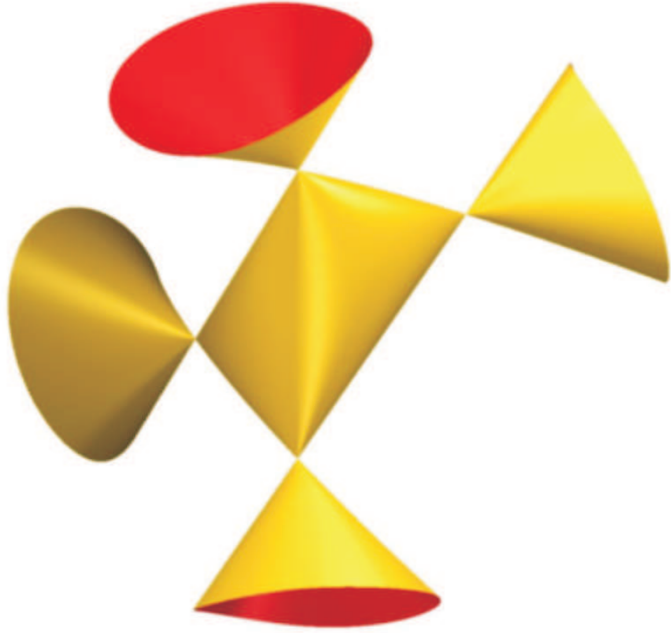}
\caption{Zero locus of the determinant of $G$. The compact region
corresponds to the set of Gram matrices.  (The figure is produced by
SINGULAR software.) } \label{Fig_gram_matrix}
\end{figure}

The local behavior of the level surfaces near the critical level can
be studied with the local formula:
$$ \det G =- \zeta_1^2 - \zeta_2^2 + \zeta_3^2 .$$
Its zero level is a quadratic cone. We restrict ourselves by points
inside the box. Near the singular points we have a homeomorphism:
$$ (\det G)^{-1}[0,\epsilon] = (\det G)^{-1}[\epsilon] \times [0,\epsilon] $$
For the non-critical points this is is guaranteed by the regular
interval theorem; so the product structure is global. We have shown
the following:

\begin{prop} (Fig. \ref{Fig_gram_matrix}) The closure of the component
of $G^{-1}(0,a^2b^2c^2)$,which contains $(0,0,0)$ is a topological
$3$-ball. Its boundary is a topological 2-sphere (differentiable
outside 4 critical points).\qed
\end{prop}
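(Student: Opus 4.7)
The plan is to apply Morse-theoretic reasoning to $\det G$ restricted to the connected component $\mathcal{C}$ of $\{0 < \det G < a^2b^2c^2\}$ whose closure contains the origin. From the critical point computation already carried out, $\det G$ has no critical points in the interior of $\mathcal{C}$: the unique interior critical point is the maximum at $(0,0,0)$ with value $a^2b^2c^2$, and the four index-$2$ saddle points all lie on the boundary level $\det G = 0$. Hence by the regular interval theorem, for any $0 < \epsilon_1 < \epsilon_2 < a^2b^2c^2$ the slab $\{\epsilon_1 \le \det G \le \epsilon_2\} \cap \overline{\mathcal{C}}$ is diffeomorphic to $L \times [\epsilon_1,\epsilon_2]$ for any generic level set $L$ inside $\mathcal{C}$.

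Next I would pin down the diffeomorphism type of $L$. By the Morse lemma at $(0,0,0)$ there are local coordinates in which $\det G = a^2b^2c^2 - \zeta_1^2 - \zeta_2^2 - \zeta_3^2$, so for $\epsilon$ slightly below $a^2b^2c^2$ the level $\{\det G = \epsilon\}$ near the maximum is a small round $2$-sphere. Combined with the product structure above, every level $\{\det G = \epsilon\}$ inside $\mathcal{C}$ is a topological $2$-sphere for $\epsilon \in (0, a^2b^2c^2)$, and the super-level set $\{\epsilon \le \det G \le a^2b^2c^2\} \cap \overline{\mathcal{C}}$ is a topological closed $3$-ball, namely a cone on $S^2$ with apex $(0,0,0)$.

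It remains to extend the description across the boundary $\{\det G = 0\}$ and show that $\{0 \le \det G \le \epsilon\} \cap \overline{\mathcal{C}}$ is a collar $S^2 \times [0,\epsilon]$. At non-critical boundary points this is immediate from the regular interval theorem applied along the nonvanishing gradient. Near each of the four index-$2$ saddles the Morse lemma gives the normal form $\det G = \zeta_3^2 - \zeta_1^2 - \zeta_2^2$; I would then check that the box constraints $|x| \le bc$, $|y| \le ac$, $|z| \le ab$ force exactly one of the two local cone sheets of $\{\det G \ge 0\}$ to lie in $\overline{\mathcal{C}}$, and exhibit an explicit homeomorphism (for instance $(\zeta_1,\zeta_2,t) \mapsto (\zeta_1,\zeta_2,\sqrt{\zeta_1^2+\zeta_2^2+t})$) showing $\{0 \le \det G \le \epsilon\}_{\mathrm{loc}} \cong D^2 \times [0,\epsilon]$. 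Gluing these local homeomorphisms to the collar obtained outside a neighbourhood of the saddles yields the global homeomorphism $\{0 \le \det G \le \epsilon\} \cap \overline{\mathcal{C}} \cong S^2 \times [0,\epsilon]$. Attaching this collar to the $3$-ball from the previous paragraph still produces a topological $3$-ball, whose boundary is the $2$-sphere $\{\det G = 0\} \cap \overline{\mathcal{C}}$, smooth away from the four former saddles.

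The main obstacle is the last paragraph: verifying that precisely one local cone sheet at each saddle lies in $\overline{\mathcal{C}}$ (so the boundary does not self-intersect), and matching the local conical trivialisations with the global collar produced by the regular interval theorem along their overlaps. The first point can be established by plugging the saddle coordinates $(\pm bc,\pm ac,\pm ab)$ into the box inequalities and observing that, at each of the four corners of the box intersected with $\{\det G = 0\}$, one cone sheet exits the box immediately; the second requires a partition-of-unity style argument that smooths the explicit local homeomorphism into the global collar flow along the boundary sphere.
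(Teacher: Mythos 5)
Your proposal is correct and follows essentially the same route as the paper: Morse theory at the interior maximum $(0,0,0)$, the regular interval theorem for the product structure on regular levels, and the local quadratic-cone normal form $\det G=-\zeta_1^2-\zeta_2^2+\zeta_3^2$ at the four index-$2$ critical points on $\{\det G=0\}$ to extend the collar $(\det G)^{-1}[0,\epsilon]\cong(\det G)^{-1}(\epsilon)\times[0,\epsilon]$ across the boundary. The only difference is one of care: the paper simply asserts the product homeomorphism near the singular points after restricting to the box, whereas you explicitly identify (and sketch how to close) the verification that exactly one cone sheet survives the box constraints and that the local trivialisations glue to the global collar.
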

This component is exactly the set {\small GRAM}. Moreover, in this
3-dimensional
 case {\small GRAM} is equivalent (up to isometry) to the set of triples of arm vectors. \\

Since we have $\det G = |V|^2$, the both functions have the same
level curves on the domain of common definition. So the above
proposition tell us that the (unoriented) moduli space of 3-arm is a
topological disc. By gluing two copies of {\small GRAM} along the
common boundary we get:

\begin{thm}
The  oriented moduli space of 3-arms in  ${\mathbb R}^3$ is a
3-sphere. $V$ is an exact topological Morse function on this space
with precisely two Morse critical points.\qed
\end{thm}

\end{document}